\numberwithin{equation}{section}
\newcommand{\Nset}{\mathbb{N}}
\newcommand{\Rset}{\mathbb{R}}
\newcommand{\Cset}{\mathbb{C}}
\newcommand\Ln{\mathop{\mathrm{ln}}}
\def\Arg{\mathop{\rm Arg}}
\def\ds{\displaystyle}
\begin{document}


\title[Volterra functions and Ramanujan integrals]{On Volterra functions \\ and  Ramanujan integrals}

\author[R. Garrappa, F. Mainardi]{Roberto Garrappa$^{1}$, Francesco Mainardi$^{2}$}


\address{$^{1}$ Department of Mathematics, University of Bari \\ Via E. Orabona 4, I-70125 Bari, Italy}
\email{roberto.garrappa@uniba.it}

\address{$^{2}$ Department of Physics and Astronomy, University of Bologna, and INFN \\ Via Irnerio 46, I-40126 Bologna, Italy}
\email{francesco.mainardi@bo.infn.it}

\date{September 2016 \\
{\it Keywords}: Volterra functions, Ramanujan integrals, asymptotic expansion, Laplace transform.
\\
{\it MSC 2010}:  33E20, 33E50, 33F05, 45D05, 65D20
\\ 
{\it Note}: This E-Print is a revised version with a different layout
 of the paper published  in\\
{\bf  Analysis,
Vol. 36 No~2  (2016), pp. 89--105.}  DOI: 10.1515/anly-2015-5009
\\
{\it Dedicated to the memory of Professor Anatoly Kilbas (1948--2010)}
}

 \newtheorem{theorem}{Theorem}[section]
 \newtheorem{lemma}[theorem]{Lemma}

\newtheorem{remark}[theorem]{Remark}

\begin{abstract}
Volterra functions were introduced at the beginning of the twentieth century as solutions of some integral equations of convolution type with logarithmic kernel. Since then, few authors have studied this family of functions and faced with the problem of providing a clear understanding of their asymptotic behaviour for small and large arguments. This paper reviews some of the most important results on Volterra functions and in particular collects, into a quite general framework, several results on their asymptotic expansions; these results turn out to be useful not only for the full understanding of the behaviour of the Volterra functions but also for their numerical computation. The connections with integrals of Ramanujan type, which have several important applications, are also discussed.
\end{abstract}

\maketitle

\section{Introduction}\label{S:Introduction}

In the Chapter 18, entitled ``Miscellaneous Functions'', of the third volume of the Bateman Handbook devoted to High Transcendental Functions  
\cite{ErdelyiMagnusOberhettingerTricomi3}  we find, in addition to the Mittag-Leffler and Wright functions (nowadays well known for their applications in Fractional Calculus and classified as Fox-H-functions), the so called Volterra functions. They are so named after  the great Italian mathematician Vito Volterra (1860-1940) who introduced  them, as solutions of some integral equations of convolution type with logarithmic kernels, in a 1916 article \cite{Volterra1916} and then in his 1924 book  together with Joseph P{\`e}r{\'e}s \cite{Volterra-Peres1924}. However, these functions appeared at the beginning of the second decade of the twentieth century also in the works of other famous mathematicians, including Srinivasa Ramanujan \cite{Hardy1940}, Jaques  Touchard  \cite{Touchard1913}, Edmund Landau \cite{Landau1918} and    in a variety of fields of mathematics, ranging from the theory of prime numbers to definite integrals and integral equations.
A related formula, nowadays referred to as ``the Ramanujan identity'', appeared presumably for the first time in a notebook  of 1913 of this outstanding Indian mathematician, as reported in the 1940 book by Hardy \cite{Hardy1940}.

In their most general formulation, and by following the notation of Colombo \cite {Colombo1955} adopted also in the Bateman handbook, the Volterra functions for real arguments $t>0$ read  
\begin{equation}\label{VolterraIntegralDefinition}
	\mu(t,\beta,\alpha) := \frac{1}{\Gamma(\beta+1)} \int_{0}^{\infty} \frac{t^{u+\alpha} u^{\beta}}{\Gamma(u+\alpha+1)} \, du,
	 \quad \Re(\beta) > -1\,,
	\end{equation}
but some particular notations are usually adopted in the special cases 
\begin{equation}\label{eq:VF_SpecialCases}
	\begin{array}{ll}
		\alpha=\beta=0 \,: \,& \nu(t) = \mu(t,0,0) \,,\\
		\alpha\not=0, \, \beta=0 \,: \, & \nu(t,\alpha)= \mu(t,0,\alpha)  \,, \\
		\alpha=0, \, \beta\not=0 \,: \, & \mu(t,\beta) = \mu(t,\beta,0)  \,.\\
	\end{array}
\end{equation}

Here, we recall the simplest integral equation formerly considered by Volterra which leads to the function $\nu(t)$. Using the modern notation, it is a convolution Volterra integral equation of the first kind with logarithmic kernel which reads  
\begin{equation}\label{eq:(1.3)}
	\int_0^t u(\tau) \, \log (t-\tau)\, d\tau = f(t)\,,
	\end{equation}
where $f(t)$  is an assigned, sufficiently well-behaved function 
with $f(0)=0$. 
According to  the original Volterra notation, this is a particular  case of the Volterra composition of the first kind referred to as integral equation of the closed cycle (see \cite{Volterra1916,Volterra-Peres1924}).
In our notation, the Volterra solution of (\ref{eq:(1.3)}) is   
\begin{equation}\label{eq:(1.4)}
  u(t) = - e^{-\gamma} \, \int_0^t \!\!
  \dot f (t-\tau) \, \dot \nu\left(\tau e^{-\gamma}\right)\, d\tau\,,
  \end{equation}
  where the superposed  dot to a function  denotes differentiation with respect to the argument and $\gamma =  0.57721...$ is the Euler-Mascheroni constant.
Nowadays the Volterra integral equations of convolution type (which include the Abel integral equations) are usually solved by means of the Laplace transformation.
  
  
  We incidentally find the function $\nu(t)$ in the table of inverse Laplace transforms in the second volume of the Batemat Handbook   devoted to Integral Transforms, which is also recalled in the recent papers by Mainardi et al.
\cite{MainardiMuraGorenfloSojanovic2007,MainardiMuraPagniniGorenflo2008}
on the solution of  fractional relaxation/diffusion equations of distributed order.
  
To the best knowledge of the authors, the Volterra  functions do not appear in any other Handbook on Special Functions and  Integral transforms published in English even if investigated to some extent in France between 1943-1953. In fact,  the Volterra functions have aroused a special interest of several French mathematicians (Pierre Humbert, Louis Poli, Maurice Parodi, Serge Colombo and Pierre Barrucand) who investigated them in the context of the Laplace transformation where the functions play an important role as direct and inverse transforms.

In the former Soviet Union the Volterra integral equations with logarithmic kernel have attracted the attention of 
Dzrbashyan  \cite{Dzrbashyan1959,Dzrbashyan1960} in the sixties,
and of Krasnov et al. \cite{Krasnov-et-al1977} and  
Kilbas \cite{Kilbas1976,Kilbas1977} in the seventies.   Other relevant contributions on these equations include the 1959 paper by Butzer \cite{Butzer1959} and the 1992 book by Srivastava and Buschman 
\cite{Srivastava-Buschman1992}.

Recently Alexander Apelblat, after his preliminary interaction with Serge Colombo, and a number of his own papers since 1985, 
 has published  in 2008 \cite{Apelblat2008} and in 2010 \cite{Apelblat2010}
  two books entirely devoted to Volterra functions providing  historical perspectives, a large bibliography  and  lists of identities and integral transforms related to these functions.
  The Apelblat book  of 2010 is  essentially  devoted to integral transforms related to Volterra functions, so that it can be considered an extraction from the previous larger treatise of 2008. 

In the present survey we cannot recall all the topics dealt in the excellent books by Apelblat, that up to nowadays remain the only relevant sources of these almost forgotten functions. However our main contribute is devoted to complement these treatises by presenting, in a quite general framework, the main results on the asymptotic behaviour of the Volterra functions and illustrate them by means of some plots obtained after numerical computation. We also study and present, in a more formal way, the generalization of the Ramanujan identities and functions which are strictly related to the Volterra functions and have some important and interesting applications in physics; this result is achieved after providing an explicit representation of the residues of Laplace transform of the Volterra functions (a results which is exploited also for the numerical computation). Although some of these results were already presented in the book of Apelblat for few instances of the second parameter $\beta$, here we provide a general formulation which holds for any admissible value of $\beta$.


 We hope that our analysis will be useful  for researchers  interested on possible applications of non local operators involving not only power law functions (as it occurs in pseudo differential operators of fractional calculus), but also slow varying functions  like logarithmic functions.

The plan of the paper is the following. In the next Section we recall the Laplace transform of the Volterra functions and we provide an explicit formula for the evaluation of its residue in order to highlight, in Section \ref{S:Ramanujan}, the connections of the Volterra functions $\mu(t, k,\alpha)$, for $k \in \Nset$ and $-1<\alpha\le0$, with the Ramanujian integral and some generalizations. We hence discuss in Section \ref{S:Asymptotic}  some asymptotic representations and we provide some comparisons with the results obtained by the numerical inversion of the Laplace transform. Finally, Section \ref{S:ConcludingRemarks} is devoted to some concluding remarks. As an instructive exercise of Laplace transforms, in the Appendix we show how to obtain the Volterra solution (\ref{eq:(1.4)}), so reducing the original Volterra approach to a student problem.

\section{Laplace transform of the Volterra functions and Ramanujan integrals}\label{S:LaplaceTransform}

The Laplace transform ${\mathcal M}(s,\beta,\alpha)$ of the Volterra function 
$\mu(t,\beta,\alpha)$ is 
\begin{equation}
	{\mathcal M}(s,\beta,\alpha) 
:= \int_0^\infty \!\! e^{-st}\, 	\mu(t,\beta,\alpha)\, dt  
	= \frac{1}{s^{\alpha+1} (\ln(s))^{\beta+1}},
	\end{equation}
	for  $\Re(\alpha)>-1, \, \Re(\beta) >-1, \, \Re(s) > 1$,
where  the introduction of a branch--cut on the real negative axis is necessary to make single--valued both the real power and the logarithmic function.
 However, when $\beta\not\in\Nset$ it is necessary to extend the cut on $(0,1]$ to avoid the multi--valued character of the composite function $(\ln(s))^{\beta+1}$. 

Therefore, for $\beta \in \Nset$ we must consider a branch--cut on $(-\infty,0]$ with a branch--point singularity at $s^{\star}=0$ and an isolated singularity at $s^{\star}=1$; otherwise, i.e. when $\beta \in \Rset-\Nset$, the branch-cut must be chosen on $(-\infty,1]$, with the branch--point singularity at $s^{\star}=1$. 

With the above selection of the branch-cut the single--valued counterpart of the complex logarithmic function is hence
\begin{equation}\label{eq:ComplexLog}
	\Ln s = \ln |s| + i \Arg(s) 
	, \quad
	-\pi < \Arg(s) \le \pi
\end{equation}

The main consequence of the different behaviour according to the integer or non integer nature of $\beta$ is in the possibility of expressing the function $\mu(t,\beta,\alpha)$ in some alternative ways by using the formula for the inversion of the Laplace transform.

When $\beta\not\in\Nset$ it is indeed possible to formulate $\mu(t,\beta,\alpha)$ only as 
\begin{equation}\label{eq:LTInversion1}
	\mu(t,\beta,\alpha) 
	= \frac{1}{2 \pi i} \int_{{\mathcal C}_1} e^{st} {\mathcal M}(s,\beta,\alpha)  \, ds
\end{equation}
where ${\mathcal C}_1$ is any contour leaving at the left the whole real interval $(-\infty,1]$. 

When $\beta\in\Nset$, $s^{\star}=1$ is a pole of order $\beta+1$ and it can be removed by residue subtraction according to
\begin{equation}\label{eq:LTInversion0}
	\mu(t,\beta,\alpha) = \mathop{Res} \bigl( e^{st} {\mathcal M}(s,\beta,\alpha) , 1 \bigr) + \frac{1}{2 \pi i} \int_{{\mathcal C}_{0}} e^{st} {\mathcal M}(s,\beta,\alpha)  \, ds ,
\end{equation}
where the contour ${\mathcal C}_{0}$ crosses the real axis in a point on $(0,1)$ and hence it encompasses at the left only the interval $(-\infty,0]$, with $s^{\star}=1$ remaining at the right of ${\mathcal C}_{0}$. The residue in (\ref{eq:LTInversion0}) can be evaluated according to the following result.

\begin{theorem}\label{thm:PolynomialResidue}
Let $k \in \Nset$. There exists a polynomial $P_{k}(t)$ of degree  $k$, whose coefficients depend on $\alpha$, such that
\begin{equation}
	\mathop{Res} \bigl( e^{st} {\mathcal M}(s,k,\alpha) , 1 \bigr)  = \frac{e^{t}}{k!} P_{k}(t) .
\end{equation}
\end{theorem}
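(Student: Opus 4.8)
The plan is to turn the computation of the residue into the extraction of a single Taylor coefficient. Near $s=1$ the principal logarithm $\Ln s$ is analytic with a simple zero, so $(\Ln s)^{k+1}$ vanishes to order $k+1$ while $s^{\alpha+1}$ stays analytic and non-zero; hence $\mathcal{M}(s,k,\alpha)$ has a pole of exact order $k+1$ at $s=1$ and the residue is well defined (and $s=1$ sits safely away from the branch cut $(-\infty,0]$, so there is no multivaluedness issue locally). The decisive step is the conformal substitution $z=\Ln s$, i.e.\ $s=e^{z}$, which maps a neighbourhood of $s=1$ biholomorphically onto a neighbourhood of $z=0$. Since the residue of a meromorphic differential is invariant under a biholomorphic change of coordinate, and since a direct computation gives
\[
  e^{st}\,\mathcal{M}(s,k,\alpha)\,ds
  = e^{t e^{z}}\,e^{-\alpha z}\,\frac{dz}{z^{k+1}},
\]
the residue we want equals the residue at $z=0$ of the right-hand side, that is, the coefficient of $z^{k}$ in the Taylor expansion of the entire function $e^{t e^{z}}e^{-\alpha z}$.

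Next I would make that coefficient explicit. Writing $e^{t e^{z}}=e^{t}\,e^{t(e^{z}-1)}$ and recalling the exponential generating function of the Bell (Touchard) polynomials $B_{n}$,
\[
  e^{t(e^{z}-1)}=\sum_{n=0}^{\infty}B_{n}(t)\,\frac{z^{n}}{n!},
  \qquad
  e^{-\alpha z}=\sum_{m=0}^{\infty}(-\alpha)^{m}\,\frac{z^{m}}{m!},
\]
a Cauchy product yields that the coefficient of $z^{k}$ equals $\dfrac{e^{t}}{k!}\sum_{j=0}^{k}\binom{k}{j}(-\alpha)^{\,k-j}B_{j}(t)$. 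Thus one may take $P_{k}(t)=\sum_{j=0}^{k}\binom{k}{j}(-\alpha)^{k-j}B_{j}(t)$, which is visibly a polynomial in $t$ whose coefficients are polynomials in $\alpha$, as claimed.

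To finish, I would verify the degree. Each $B_{j}$ is a polynomial of degree $j$ with leading term $t^{j}$ (for instance because $B_{n}(t)=\sum_{j}S(n,j)t^{j}$ with $S(n,j)$ the Stirling numbers of the second kind, so that $S(n,n)=1$), so in the sum above the term $j=k$ contributes $t^{k}$ with coefficient $1$ and no higher power of $t$ occurs; hence $P_{k}$ has degree exactly $k$. (When $\alpha=0$ this recovers $P_{k}=B_{k}$.)

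The only point that needs care is the justification of the substitution when $k\ge 1$, i.e.\ for a pole of order $>1$: one has to invoke the coordinate-invariance of residues of differentials, or, equivalently, check directly that
\[
  \frac{1}{k!}\lim_{s\to 1}\frac{d^{k}}{ds^{k}}\Bigl[(s-1)^{k+1}\,e^{st}\,\mathcal{M}(s,k,\alpha)\Bigr]
\]
reproduces the coefficient above --- which is exactly the chain-rule bookkeeping encoded in $z=\Ln s$, using that $(s-1)/\Ln s\to 1$ as $s\to 1$. Everything else (the local analyticity away from the branch cut, and the two generating-function identities) is routine.
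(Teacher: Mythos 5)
Your proof is correct, and it takes a genuinely different route from the paper's. The paper works entirely in the variable $s$: it expands $s^{-\alpha-1}$ binomially and $(\ln s)^{-k-1}$ as $(s-1)^{-k-1}$ times a power series whose coefficients $v_n^{(k)}$ are generated recursively by Miller's formula, then applies the Leibniz rule to the standard higher-order-pole residue formula
\[
  \mathop{Res}\bigl(e^{st}\mathcal{M}(s,k,\alpha),1\bigr)
  = \frac{1}{k!}\lim_{s\to 1}\frac{d^{k}}{ds^{k}}\Bigl[(s-1)^{k+1}e^{st}\mathcal{M}(s,k,\alpha)\Bigr],
\]
arriving at $\frac{e^t}{k!}\sum_{j=0}^{k}\binom{k}{j}t^{k-j}j!\,c_j$ with the $c_j$ given by a convolution of the two expansions. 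You instead substitute $z=\Ln s$, invoke the coordinate invariance of residues of meromorphic differentials (a correct and standard fact, since the residue is a contour integral), and reduce everything to reading off the $z^{k}$ Taylor coefficient of $e^{te^{z}}e^{-\alpha z}$. The trade-off: the paper's route is elementary (only power-series bookkeeping) and directly yields the recursion it later uses for numerical evaluation of the $P_k$; your route is conceptually cleaner and produces the closed form $P_k(t)=\sum_{j=0}^{k}\binom{k}{j}(-\alpha)^{k-j}B_j(t)$ in terms of Bell--Touchard polynomials, which the paper does not obtain, and which makes the monicity and exact degree $k$ immediate (I checked that your formula reproduces all five entries of the paper's Table 1). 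Your closing remark correctly identifies the only step requiring justification --- the change of variables at a pole of order $k+1$ --- and the invariance-of-the-contour-integral argument you cite disposes of it.
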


\begin{proof}
After preliminarily observing that
\[
	s^{-\alpha-1} 
	= \bigl( (s-1) + 1 \bigr)^{-\alpha-1}
	= \sum_{j=0}^{\infty} \binom{-\alpha-1}{j} (s-1)^{j} 
\]
and
\[
	\bigl( \ln s \bigr)^{-k-1} 
	= (s-1)^{-k-1} \left( 1 + \sum_{n=1}^{\infty} \frac{(-1)^n}{n+1}(s-1)^{n} \right)^{-k-1}
	= (s-1)^{-k-1} \sum_{n=1}^{\infty} v_{n}^{(k)} (s-1)^{n} ,
\]
where the coefficients $v_{n}^{(k)}$ can be evaluated, thanks to the Miller's formula \cite[Theorem 1.6c]{Henrici1974}, in a recursive way as 
\[
	v_{0}^{(k)} = 1 , \quad
	v_{n}^{(k)} = \sum_{j=1}^{n} \left( \frac{k j}{n}+1\right) \frac{(-1)^{j+1} v_{n-j}^{(k)}}{j+1} ,
\]
it is immediate to write 
\[
	{\mathcal M}(s,k,\alpha) (s-1)^{k+1} = \sum_{n=0}^{\infty} c_{n} (s-1)^{n}
	, \quad
	c_{n} = \sum_{j=0}^{n} \binom{-\alpha-1}{j} v_{n-j}^{(k)} .
\]

By the Leibniz's rule for differentiation we have 
\begin{eqnarray*}
	\lefteqn{
	\frac{d^{k}}{ds^{k}} e^{st} {\mathcal M}(s,k,\alpha) (s-1)^{k+1}
	 =  \sum_{j=0}^{k} \binom{k}{j} \left( \frac{d^{k-j}}{ds^{k-j}} e^{st} \right) 
			\left( \frac{d^{j}}{ds^{j}}\sum_{n=0}^{\infty} c_{n} (s-1)^{n} \right) }
	 \hspace{1.0cm} & & \\
	&=& e^{st} \sum_{j=0}^{k} \binom{k}{j} t^{k-j} \left( j! c_{j} + \sum_{n=j+1}^{\infty} 
			\frac{n!}{(n-j)!} c_{n} (s-1)^{n-j} \right) \
\end{eqnarray*}
and hence the formula for the residues allows to compute
\begin{eqnarray*}
	\lefteqn{
	\mathop{Res} \bigl( e^{st} {\mathcal M}(s,k,\alpha) , 1 \bigr)
	 = \lim_{s\to1} \frac{1}{k!} \frac{d^{k}}{ds^{k}} e^{st} {\mathcal M}(s,k,\alpha) (s-1)^{k+1} }
	\hspace{1.0cm} & & \\
	&=& \frac{e^{t}}{k!} \sum_{j=0}^{k} \binom{k}{j} t^{k-j} j! c_{j} 
	= \frac{e^{t}}{k!} \sum_{j=0}^{k} p_{k-j} t^{k-j}, \
\end{eqnarray*}
where $p_{k-j}=c_{j}k!/(k-j)!$ and from which the proof immediately follows.
\end{proof}

In Table \ref{tbl:PolynomialResidue} we present the first few polynomials involved in the residues of the LT of $\mu(t,k,\alpha)$. Further polynomials can be easily evaluated as indicated in the proof of Theorem \ref{thm:PolynomialResidue}.


\begin{table}[ht!]
\[
	\begin{array}{c|l} \hline
		k & P_{k}(t) \\ \hline
		0 & 1 \\
		1 & t - \alpha \\
		2 & t^2 - (2\alpha-1)t + \alpha^2 \\
		3 & t^3 - 3(\alpha-1)t^2 + (3\alpha^2 - 3\alpha + 1)t - \alpha^3 \\
		4 & t^4 - 2(2\alpha-3)t^3 + (6\alpha^2 - 12\alpha + 7)t^2 - (4\alpha^3 - 6\alpha^2 + 4\alpha - 1)t + \alpha^4 \
	\end{array}
\]
\caption{First polynomials $P_{k}(t)$, $k=0,1,\dots,4$ for the evaluation or residues of the LT of $\mu(t,k,\alpha)$} 
\label{tbl:PolynomialResidue}
\end{table}

Theorem \ref{thm:PolynomialResidue} is of particular importance in view of the use of (\ref{eq:LTInversion0}) for the numerical computation of $\mu(t,\beta,\alpha)$ when $\beta \in \Nset$. An efficient method for evaluating the Volterra function is indeed based on the numerical inversion of the Laplace transform which, for stability reasons and for reducing the round-off errors, requires the use of a contour located as close as possible to the origin.

\section{The Ramanujan identity and its generalizations}\label{S:Ramanujan}

The simplest instance of the Volterra functions is for $\beta=\alpha=0$ for which the already introduced notation $\nu(t) = \mu(t,0,0)$ is usually used. In this case the Ramanujan identity \cite[Page 196]{Hardy1940} 
\begin{equation}\label{eq:RamanIdentity}
	\nu(t) = e^t - N(t) 
	, \quad
	N(t) = \int_{0}^{\infty} \frac{e^{-rt}  }{r \left[ (\ln r)^2 + \pi^2 \right] } \, dr ,
\end{equation}
allows to express $\nu(t)$ in terms of the Ramanujan function $N(t)$ whose plot is presented in Figure \ref{fig:Gen_Ram0}.

\begin{figure}[ht!]
	\centering
		\includegraphics[width=0.8\textwidth]{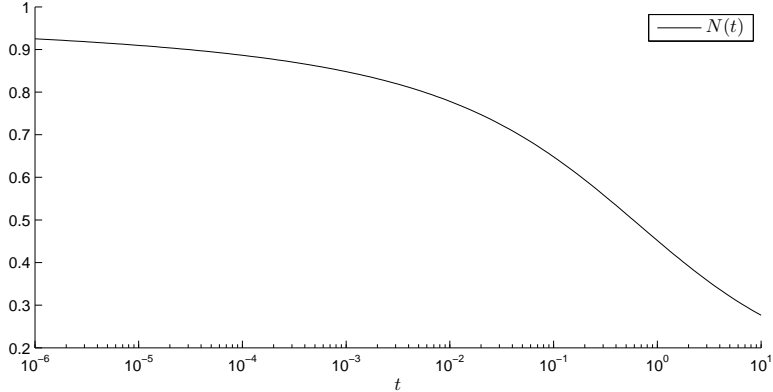} 
	\caption{Plot of the Ramanujan function $N(t)$.} 
\label{fig:Gen_Ram0}
\end{figure}


In a paper dedicated to the study of the asymptotic expansions of the solutions of some heat conduction problems \cite{Wood1966}, Wood provided a formula for the successive derivatives of $N(t)$ as
\begin{equation}
	\frac{d^n}{dt^n} N(t) = e^t - \int_{-n}^{\infty} \frac{t^u}{\Gamma(u+1)} du ,
\end{equation}
obtained on the basis of a more general relationship due to Ramanujan \cite[Eq. (11.11.1)]{Hardy1940}.  

The Ramanujan function $N(t)$ is of interest not only because it allows an alternative representation of $\nu(t)$ but it has also several important applications in physics as one can deduce from its appearance in papers on ``neutron transport theory'' \cite{DorningNicolaenkoThurber1969} and on ``slowing down of electrons'' \cite{SpencerFano1954}.

The Ramanujan identity can be extended to a slightly more general range of Volterra functions as we can prove by means of the following result.

\begin{theorem}\label{thm:RamanGeneral}
Let $k \in \Nset$, $-1<\alpha\le0$ and $t>0$. Then
\begin{equation}\label{eq:RamanGeneral}
	\mu(t,k,\alpha) 
	= \frac{e^{t}}{k!} P_{k}(t) 
	-  N_{k}(t,\alpha) ,
\end{equation}
where $P_{k}(t)$ are the polynomials introduced in Theorem \ref{thm:PolynomialResidue} and
\begin{equation}\label{eq:RamanGeneralFunction}
	N_{k}(t,\alpha) =
	\frac{1}{\pi} \int_{0}^{\infty} \frac{e^{-rt} \sin \left[ \alpha\pi + (k+1) \Arg(\ln r + i\pi) \right] }
					 {r^{\alpha+1} \left[ (\ln r)^2 + \pi^2 \right]^{\frac{k+1}{2}} }	\, dr .
\end{equation}
\end{theorem}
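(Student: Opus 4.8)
The plan is to read off (\ref{eq:RamanGeneral}) directly from the Laplace inversion formula (\ref{eq:LTInversion0}), which applies with $\beta=k$ since $k\in\Nset$. Theorem \ref{thm:PolynomialResidue} already identifies the residue term in (\ref{eq:LTInversion0}) as $e^{t}P_{k}(t)/k!$, so the whole statement reduces to the single identity
\[
	\frac{1}{2\pi i}\int_{{\mathcal C}_{0}} e^{st}\,{\mathcal M}(s,k,\alpha)\,ds = -\,N_{k}(t,\alpha) ,
\]
with $N_{k}$ the function in (\ref{eq:RamanGeneralFunction}); everything else is the evaluation of this one contour integral.

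The key device is to collapse the Bromwich contour ${\mathcal C}_{0}$ onto the branch cut $(-\infty,0]$, replacing it by a Hankel-type loop consisting of a ray arriving from $-\infty$ along the lower bank of the cut, a small circle $|s|=\varepsilon$ around the branch point $s^{\star}=0$, and a ray leaving to $-\infty$ along the upper bank, closed at infinity by large arcs in $\Re(s)<0$. This deformation is legitimate: $e^{st}{\mathcal M}(s,k,\alpha)$ is holomorphic in $\Cset\setminus(-\infty,0]$ apart from the pole at $s^{\star}=1$, which stays to the right of ${\mathcal C}_{0}$ and is never crossed; and on the large arcs $|e^{st}|$ is exponentially small for $t>0$ while $|{\mathcal M}(s,k,\alpha)|=|s|^{-\alpha-1}\,|\Ln s|^{-(k+1)}\to0$, so those arcs contribute nothing. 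The genuinely delicate point --- and the place where the hypotheses enter --- is the small circle around the branch point, where $|e^{st}{\mathcal M}(s,k,\alpha)|\cdot 2\pi\varepsilon=O\!\bigl(\varepsilon^{-\alpha}\,|\ln\varepsilon|^{-(k+1)}\bigr)\to 0$ as $\varepsilon\to 0^{+}$: for $-1<\alpha<0$ this follows from $\varepsilon^{-\alpha}\to0$, and for $\alpha=0$ from $|\ln\varepsilon|^{-(k+1)}\to0$ since $k+1\ge1$. I expect the careful justification of this deformation to be the main obstacle, together with the preliminary observation that the integral along the original vertical contour is only conditionally convergent and must first be bent into the region $\Re(s)<0$ to make it absolutely convergent.

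It then remains to add up the integrals along the two banks of the cut. Parametrising $s=r\,e^{\pm i\pi}$ with $r>0$ gives $e^{st}=e^{-rt}$, $s^{-\alpha-1}=r^{-\alpha-1}e^{\mp i\pi(\alpha+1)}=-\,r^{-\alpha-1}e^{\mp i\pi\alpha}$ and $\Ln s=\ln r\pm i\pi$. Writing $\ln r+i\pi=\rho\,e^{i\theta}$ with $\rho=\bigl[(\ln r)^{2}+\pi^{2}\bigr]^{1/2}$ and $\theta=\Arg(\ln r+i\pi)\in(0,\pi)$, so that $\ln r-i\pi=\rho\,e^{-i\theta}$, the lower bank (oriented from $-\infty$ to $0$) and the upper bank (oriented from $0$ to $-\infty$) combine, after multiplication by $1/(2\pi i)$, into
\[
	\frac{1}{2\pi i}\int_{0}^{\infty}\frac{e^{-rt}}{r^{\alpha+1}\rho^{k+1}}\Bigl(e^{-i(\alpha\pi+(k+1)\theta)}-e^{i(\alpha\pi+(k+1)\theta)}\Bigr)\,dr = -\frac{1}{\pi}\int_{0}^{\infty}\frac{e^{-rt}\,\sin\!\bigl[\alpha\pi+(k+1)\theta\bigr]}{r^{\alpha+1}\bigl[(\ln r)^{2}+\pi^{2}\bigr]^{(k+1)/2}}\,dr ,
\]
which is precisely $-N_{k}(t,\alpha)$; this completes the argument. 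As a consistency check, for $k=0$ and $\alpha=0$ one has $\sin\theta=\pi/\rho$, whence $N_{0}(t,0)=\int_{0}^{\infty}\frac{e^{-rt}}{r\,[(\ln r)^{2}+\pi^{2}]}\,dr$ and (\ref{eq:RamanGeneral}) reduces to the classical Ramanujan identity (\ref{eq:RamanIdentity}).
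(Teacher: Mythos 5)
Your proposal is correct and follows essentially the same route as the paper's proof: starting from the inversion formula (\ref{eq:LTInversion0}), applying Theorem \ref{thm:PolynomialResidue} for the residue term, collapsing ${\mathcal C}_{0}$ onto a Hankel contour around the branch cut $(-\infty,0]$, using $\alpha\le 0$ to kill the small-circle contribution at the branch point, and combining the two banks via $\ln r\pm i\pi=\rho e^{\pm i\theta}$ into the sine integral $-N_{k}(t,\alpha)$. Your explicit justification of the contour deformation (decay on the large arcs, the $O(\varepsilon^{-\alpha}|\ln\varepsilon|^{-(k+1)})$ bound near the origin) is in fact slightly more detailed than what the paper records.
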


\begin{proof}
Consider the integral representation (\ref{eq:LTInversion0}) and, for $0<\varepsilon<1$, deform the contour ${\mathcal C}_{0}$ to an Hankel path $Q(\varepsilon)$ starting from $-\infty$ along the lower negative real axis, encircling the circle $|s| = \varepsilon < 1$ in the counterclockwise sense and returning to $-\infty$ along the upper negative real axis. By splitting $Q(\varepsilon)$ into three sub paths
 $Q(\varepsilon) = Q_{-}(\varepsilon) \cup Q_o(\varepsilon) \cup Q_{+}(\varepsilon)$ (see Figure \ref{fig:Fig_HankelPath}) with
$Q_{\pm}(\varepsilon): s=re^{\pm i\pi}$, $\varepsilon \le r < \infty$ and $Q_{o}(\varepsilon): s=\varepsilon e^{i \theta}$, $-\pi < \theta < \pi$, we can therefore write
\[
	\int_{{\mathcal C}_{0}} e^{st} {\mathcal M}(s,k,\alpha)  \, ds = \int_{Q(\varepsilon)} e^{st} {\mathcal M}(s,k,\alpha)  \, ds	= M_{-}(\varepsilon) + M_{o}(\varepsilon) + M_{+}(\varepsilon) ,
\]
where
\[
	M_{\pm}(\varepsilon) 	
	= \int_{Q_{\pm}(\varepsilon)} e^{st} {\mathcal M}(s,k,\alpha)  \, ds
	, \quad
		M_{o}(\varepsilon) 	
	= \int_{Q_{o}(\varepsilon)} e^{st} {\mathcal M}(s,k,\alpha)  \, ds.
\]

\begin{figure}[htb]
	\centering
	\includegraphics[width=0.45\textwidth]{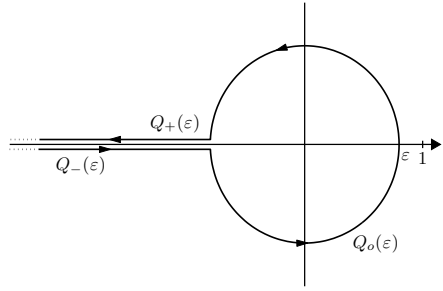} 
	\caption{Hankel path $Q(\varepsilon)$ and its three sub paths.}\label{fig:Fig_HankelPath}
\end{figure}

Since for any $z\in\Cset$ it is $z^{\alpha} = |z|^{\alpha} e^{i\alpha \Arg(z)}$, by means of (\ref{eq:ComplexLog}) we can see that
\[
	M_{\pm}(\varepsilon)
	= e^{\pm i\pi} \int_{\varepsilon}^{\infty} e^{-rt} {\mathcal M}(re^{\pm i\pi},k,\alpha) \, dr 
	= \mp \int_{\varepsilon}^{\infty} \frac{e^{-rt} e^{\mp i(\alpha+1)\pi} }{r^{\alpha+1} \left( \ln r \pm i\pi \right)^{k+1}} \, dr 
\]
and
\[
	M_{o}(\varepsilon)
	=  \varepsilon i  \int_{-\pi}^{\pi} e^{\varepsilon t e^{i\theta}} {\mathcal M}(\varepsilon e^{i\theta},k,\alpha)e^{i \theta} \, d\theta 
	= \frac{i}{\varepsilon^{\alpha}} \int_{-\pi}^{\pi} 
	 	 		\frac{e^{\varepsilon t e^{i\theta}} e^{-i \alpha \theta} }{ \left( \ln\varepsilon+i \theta\right)^{k+1}}  \, d\theta 
\]
and, under the assumption $\alpha \le 0$, it is clearly $\lim_{\varepsilon \to 0} M_{2}(\varepsilon)=0$. Therefore, as $\varepsilon \to 0$ we have
\begin{eqnarray*}
	\lefteqn{
		\lim_{\varepsilon \to 0}
		\int_{Q(\varepsilon)} e^{st} {\mathcal M}(s,k,\alpha)  \, ds
		= \lim_{\varepsilon \to 0} M_{-}(\varepsilon) + \lim_{\varepsilon \to 0} M_{+}(\varepsilon) = } \hspace{1.0cm} && \\
	&=& - \int_{0}^{\infty} 
			\frac{e^{-rt} \left[e^{i \alpha\pi} \left( \ln r + i\pi \right)^{k+1} -  e^{-i\alpha\pi} \left( \ln r - i\pi \right)^{k+1} \right] }
					 {r^{\alpha+1} \left[ (\ln r)^2 + \pi^2 \right]^{k+1}  }
			\, dr \
\end{eqnarray*}

Since it is
\[
	\left( \ln r \pm i\pi \right)^{k+1} = |\ln r \pm i\pi|^{k+1} e^{i(k+1) \Arg(\ln r + i\pi)} =
	\left((\ln r)^2 + \pi^2\right)^{\frac{k+1}{2}} e^{i(k+1) \Arg(\ln r \pm i\pi)}
\]
and, moreover, $\Arg(\ln r - i\pi) = - \Arg(\ln r + i\pi)$, we have
\begin{eqnarray*}
	\lefteqn{
		\left[ e^{i \alpha\pi} \left( \ln r + i\pi \right)^{k+1} -  
					 e^{-i\alpha\pi} \left( \ln r - i\pi \right)^{k+1} \right] = } \hspace{1.0cm} && \\
	&=& \left((\ln r)^2 + \pi^2\right)^{\frac{k+1}{2}} \left( 
				e^{i \alpha\pi + i(k+1) \Arg(\ln r + i\pi) } - e^{-i\alpha\pi - i(k+1) \Arg(\ln r + i\pi)} \right) \\
	&=& 2 i \left((\ln r)^2 + \pi^2\right)^{\frac{k+1}{2}} \sin \left[ \alpha\pi + (k+1) \Arg(\ln r + i\pi) \right] \\		
\end{eqnarray*}
and hence 
\begin{eqnarray*}
	\frac{1}{2\pi i} \int_{{\mathcal C}_{0}} e^{st} {\mathcal M}(s,k,\alpha)  \, ds 
	&=& - \frac{1}{\pi} \int_{0}^{\infty} \frac{e^{-rt} \sin \left[ \alpha\pi + (k+1) \Arg(\ln r + i\pi) \right] }
					 {r^{\alpha+1} \left[ (\ln r)^2 + \pi^2 \right]^{\frac{k+1}{2}} }	\, dr \
\end{eqnarray*}
from which we can easily conclude the proof after applying Theorem \ref{thm:PolynomialResidue}. 
\end{proof}

In order to verify that the Ramanujan identity (\ref{eq:RamanIdentity}) is actually a special case of (\ref{eq:RamanGeneral}) when $k=0$ and $\alpha=0$ we first observe that $P_{0}(t) \equiv 1$ (see Table \ref{tbl:PolynomialResidue}) and, by means of the elementary identity $\sin(\arctan(x)) = x/\sqrt{1+x^2}$, it is 
\[
	\sin\bigl[\Arg(\ln r + i\pi)\bigr] = \frac{\pi}{\sqrt{(\ln r)^2 + \pi^2 }} .
\]

Thus, when $\alpha=0$ we have 
\[
	N_{0}(t,0) 
	= \frac{1}{\pi} \int_{0}^{\infty} 
			\frac{e^{-rt} \sin \left[ \Arg(\ln r + i\pi) \right] }
					 {r \left[ (\ln r)^2 + \pi^2 \right]^{\frac{1}{2}} } \, dr 
	=	\int_{0}^{\infty} 
			\frac{e^{-rt}  } {r \left[ (\ln r)^2 + \pi^2 \right] } \, dr 
	= N(t) .
\]
We observe that the Ramanujan function $N(t)= N_{0}(t,0)$ is a {\it completely monotonic (CM) function} (i.e., for any $k\ge0$ the $k$-th derivative $N^{(k)}(t)$ exists and $(-1)^{k}N^{(k)}(t) \ge 0$ for all $t > 0$).
 Indeed, by the Bernstein theorem a necessary and sufficient condition for a function to be CM is that it is the real Laplace transform of a non-negative measure, as the integral representation (\ref{eq:RamanIdentity}) clearly shows. The CM property makes therefore $N(t)$ to be suitable as a relaxation function in dielectric and viscoelastic systems (see,  e.g., \cite{Hanyga2005,Mainardi2010}).


\begin{remark}
The assumption $\alpha \le 0$, thanks to which $M_{0}(\varepsilon) \to 0$ when $\varepsilon \to 0$, also assures the convergence of the integral in $N_{k}(t,\alpha)$. Indeed when $\alpha < 0$ this is obvious. For $\alpha=0$ we observe that $\Arg(\ln r + i\pi) \to \pi$ when $r \to 0^{+}$ and, since $k \in \Nset$, we have %
\[
	\lim_{r\to0^{+}} \sin \left[ (k+1) \Arg(\ln r + i\pi) \right]
	= \sin \left[ (k+1) \pi \right] = 0 .
\]
\end{remark}

We are able to explicitly provide a simpler representation of $N_{k}(t,0)$ for other values of $k$. Indeed, for $\alpha=0$ and $k=1$ it is
\[
	\sin \left[ \alpha\pi + (k+1) \Arg(\ln r + i\pi) \right] = \frac{2\pi\ln r}{(\ln r)^2 + \pi^2 }
\]
and hence we have
\begin{equation}\label{eq:RamanGeneralBeta1}
	N_{1}(t,0) = 
	2 \int_{0}^{\infty} \frac{e^{-rt} \ln r }
					 {r \left[ (\ln r)^2 + \pi^2 \right]^{2} }	\, dr .
\end{equation}

Moreover, for $\alpha=0$ and $k=2$, since $\sin 3 \theta = - 4 \bigl( \sin \theta\bigr)^3 + 3 \sin \theta$ we have
\[
	\sin \left[ \alpha\pi + (k+1) \Arg(\ln r + i\pi) \right] 
	= \sin \left[ 3 \Arg(\ln r + i\pi) \right]
	= \frac{-\pi^3 + 3\pi (\ln r)^2}{\left((\ln r)^2 + \pi^2\right)^{3/2} }
\]
and hence
\begin{equation}\label{eq:RamanGeneralBeta2}
	N_{2}(t,0) = 
	\int_{0}^{\infty} \frac{e^{-rt} \left(3 (\ln r)^2 -\pi^2\right)}
					 {r \left[ (\ln r)^2 + \pi^2 \right]^{3} }	\, dr .
\end{equation}

For $k=3$, since $\sin(4\arctan(x)) = 4x(1-x^2)/(1+x^2)^2$ we can see that
\begin{equation}\label{eq:RamanGeneralBeta3}
	N_{3}(t,0) = 
	4 \int_{0}^{\infty} \frac{e^{-rt} \ln r \left((\ln r)^2 - \pi^2 \right)}
					 {r \left[ (\ln r)^2 + \pi^2 \right]^{4} }	\, dr .
\end{equation}

Some of the above representations of $\mu(t,k,\alpha)$ and $\mu(t,k,0)$ were already presented in \cite{Apelblat2008} for $k=1$ and $k=2$  although without a general result as Theorem \ref{thm:RamanGeneral}. The first few instances of the generalized Ramanujan function $N_{k}(t,0)$, for $k=1,2,3$, are presented in Figures \ref{fig:Gen_Ram1}, \ref{fig:Gen_Ram2} and \ref{fig:Gen_Ram3}; the absence of the CM character when $k>0$ is evident.

\begin{figure}[ht!]
	\centering
		\includegraphics[width=0.8\textwidth]{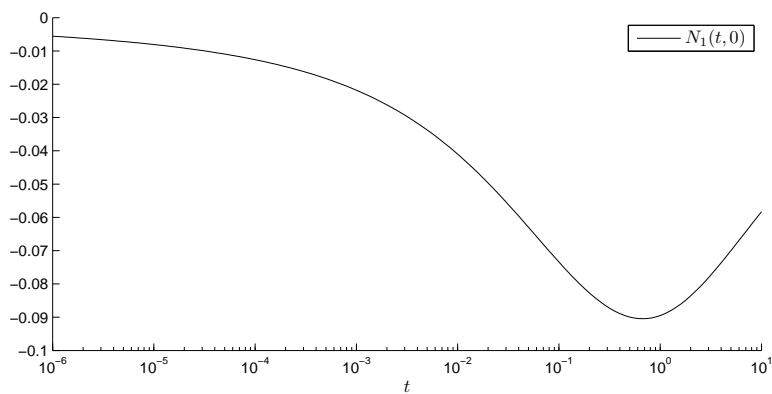} 
	\caption{Plot of the generalized Ramanujan function $N_{1}(t,0)$.} 
\label{fig:Gen_Ram1}
\end{figure}

\begin{figure}[ht!]
	\centering
		\includegraphics[width=0.8\textwidth]{Fig_Gen_Ram2} 
	\caption{Plot of the generalized Ramanujan function $N_{2}(t,0)$.} 
\label{fig:Gen_Ram2}
\end{figure}

\begin{figure}[ht!]
	\centering
		\includegraphics[width=0.8\textwidth]{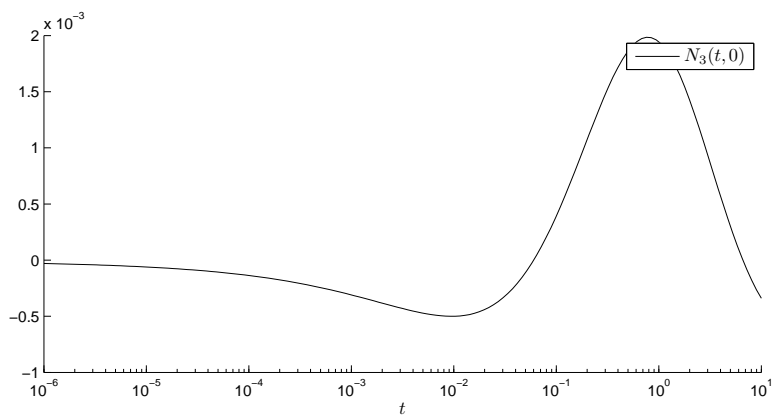} 
	\caption{Plot of the generalized Ramanujan function $N_{3}(t,0)$.} 
\label{fig:Gen_Ram3}
\end{figure}

For the numerical evaluation of the generalized Ramanujan functions we have avoided the direct computation of (\ref{eq:RamanGeneralFunction}) which involves the non trivial task of approximating integrals over an unbounded interval. We have instead preferred to invert numerically the Laplace transform by applying in (\ref{eq:LTInversion0}) the trapezoidal rule on a contour ${\mathcal C}_{0}$ of parabolic shape, according to a suitable modification of the method described in \cite{Garrappa2015_SIAM}. This approach allows to invert the Laplace transform with high accuracy and a limited computational effort on the basis of an error analysis mainly based on the distance between the contour and the singularities of the Laplace transform. The same method will be used, later on in this work, to evaluate also the Volterra function $\mu(t,\beta,\alpha)$ when $\beta \in \Nset$.

\section{Asymptotic behaviour}\label{S:Asymptotic}

Several authors in the past have faced the problem of determining asymptotic expansions for the various Volterra functions. Since the difficulties in evaluating these special functions, the knowledge of the asymptotic behaviour can provide effective information which turn out useful both for theoretical and practical purposes.

Results concerning the asymptotic expansion for small and large arguments, for the different instances (\ref{eq:VF_SpecialCases}) of the Volterra function and for the Ramanujan function (\ref{eq:RamanIdentity}), are scattered in a number of papers and books \cite{Apelblat2008,Bouwkamp1971,DorningNicolaenkoThurber1969,ErdelyiMagnusOberhettingerTricomi3,LlewllynSmith2000,WymanWong1969}. In this section we aim to collect the majority of these results and try to bring them into one more general framework.

Moreover, since an explicit representation of most of the coefficients in the asymptotic expansions is not available in a closed or simple form, we also discuss the problem of their computation by means of a numerical procedure.

We note that, although our analysis is mainly devoted to Volterra functions with real arguments $t>0$, for completeness we report the original results presented in literature for the more general case of complex arguments $z$.

\subsection{Expansion of $\mu(z,\beta,\alpha)$ for small $z$}\label{SS:mu_asy_small}

Under the assumptions that $\Re(\beta) > -1$, it was first showed in \cite{ErdelyiMagnusOberhettingerTricomi3} that $\mu(z,\beta,\alpha)$ possesses the following expansion 
\begin{equation}\label{eq:AsymptExpansionSmall}
	\mu(z,\beta,\alpha) = z^{\alpha} \sum_{n=0}^{\infty} D^{(\alpha,\beta)}_n \left( \log \frac{1}{z} \right)^{-\beta-1-n},
	\; 
	z\to 0, 
	\;
	z \in \Cset-[1,+\infty) ,
\end{equation}
where $D^{(\alpha,\beta)}_n$ are independent of $z$ and are given by
\begin{equation}
	D^{(\alpha,\beta)}_n = (\beta+1)_{n} D^{(\alpha)}_n, \quad
	D^{(\alpha)}_n = \frac{(-1)^{n}  }{n!} \mu(1,-n-1,\alpha) ,
\end{equation}
with  $(\beta)_{n}$ denoting the rising factorial
\begin{equation}
	(\beta)_{n} = 
	\frac{\Gamma(\beta+n)}{\Gamma(\beta)} = \beta(\beta+1)\cdots(\beta+n-1). 
\end{equation}

To evaluate $\mu(1,-n-1,\alpha)$, and hence $D^{(\alpha)}_n$, it is not possible to use (\ref{VolterraIntegralDefinition}) since the integral converges only for $\Re(\beta)>-1$. A repeated integration by parts \cite{ErdelyiMagnusOberhettingerTricomi3} allows, however, to extend the definition of $\mu(z,\beta,\alpha)$ to the whole $\beta$-plane according to
\begin{equation}
	\mu(z,\beta,\alpha) = \frac{(-1)^{m}}{\Gamma(\beta+m+1)} \int_{0}^{\infty} x^{\beta+m} \frac{d^m}{dx^{m}} \frac{z^{\alpha+x}}{\Gamma(\alpha+x+1)} \,dx,
\end{equation}
with $ \Re(\beta) > - m - 1$.

For integer values $\beta = -n-1$ (and hence by assuming $m=n+1$), it is simple to verify that
\begin{equation}\label{eq:mu_integer}
	\mu(z,-n-1,\alpha) = \left. (-1)^n \frac{d^n}{d x^n} \frac{z^{\alpha+x}}{\Gamma(\alpha+x+1)} \right|_{x=0}
\end{equation}
from which it is immediate to determine the coefficients $D^{(\alpha)}_n$ in terms of derivatives of the reciprocal of the gamma function as
\begin{equation}\label{eq:Dalpha}
	D^{(\alpha)}_n = \left. \frac{1}{n!}\frac{d^n}{d x^n} \frac{1}{\Gamma(\alpha+x+1)} \right|_{x=0} .
\end{equation}

To derive (\ref{eq:AsymptExpansionSmall}) it is necessary to observe from (\ref{eq:mu_integer}) that $(-1)^n\mu(1,-n-1,\alpha)$ are the coefficients in the Taylor series of $1/\Gamma(\alpha+x+1)$. Hence, by replacing this series representation in (\ref{VolterraIntegralDefinition}), together with $z^x = \exp(-x \log 1/z)$, and applying the Watson's lemma, the expansion (\ref{eq:AsymptExpansionSmall}) follows after performing a simple term-by-term integration.

As an illustrative example, in figure \ref{fig:AsyExp_Small1} we compare, for $\beta=1$, $\alpha=0.8$ and real $t>0$, the Volterra function $\mu(t,\beta,\alpha)$ and the first term $D^{(\alpha,\beta)}_n t^{\alpha} (\log 1/t)^{-\beta-1}$ in the corresponding asymptotic expansion (\ref{eq:AsymptExpansionSmall}); the reference value $\mu(t,\beta,\alpha)$ is evaluated, as explained in Section \ref{S:Ramanujan}, by numerically inverting the Laplace transform (\ref{eq:LTInversion0}) by means of a modification of the technique presented in \cite{Garrappa2015_SIAM}.

\begin{figure}[ht!]
	\centering
		\includegraphics[width=0.8\textwidth]{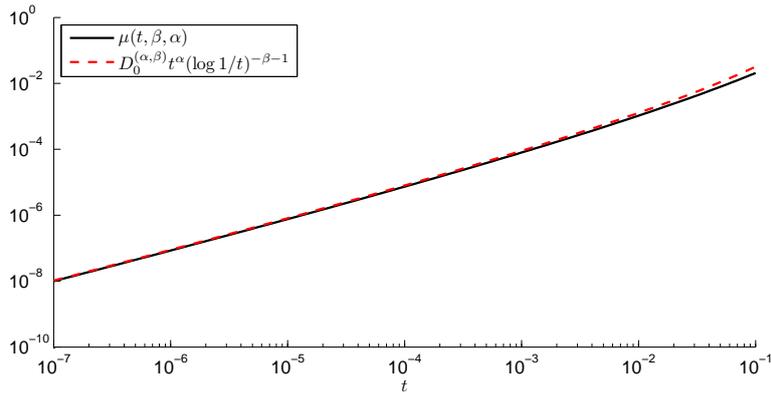} 
	\caption{Comparison between $\mu(t,\beta,\alpha)$ and the first term in the asymptotic expansion (\ref{eq:AsymptExpansionSmall}) for $\beta=1$, $\alpha=0.8$ and small $t>0$.} 
\label{fig:AsyExp_Small1}
\end{figure}

As we can clearly see, the first term in the expansion (\ref{eq:AsymptExpansionSmall}) provides a very accurate approximation of the original function as $t\to0$ and it can be used also as a tool for the numerical computation of the Volterra function; in the left plot of Figure \ref{fig:AsyExp_Small_Differ} we present, for the same parameters $\beta=1$ and $\alpha=0.8$, the difference between $\mu(t,\beta,\alpha)$ and its approximations 
\begin{equation}
	\mu_{N}(t,\beta,\alpha) = t^{\alpha} \left( \log \frac{1}{t} \right)^{-\beta-1} \sum_{n=0}^{N} D^{(\alpha,\beta)}_n \left( \log \frac{1}{t} \right)^{-n}   
\end{equation}
obtained by truncating the series (\ref{eq:AsymptExpansionSmall}) after $N$ terms. These differences, which tend to 0 in a quite fast way, indicate that few terms in the expansion (\ref{eq:AsymptExpansionSmall}) are sufficient to depict in a satisfactory way the behaviour of $\mu(t,\beta,\alpha)$ and provide an accurate approximation.

\begin{figure}[ht!]
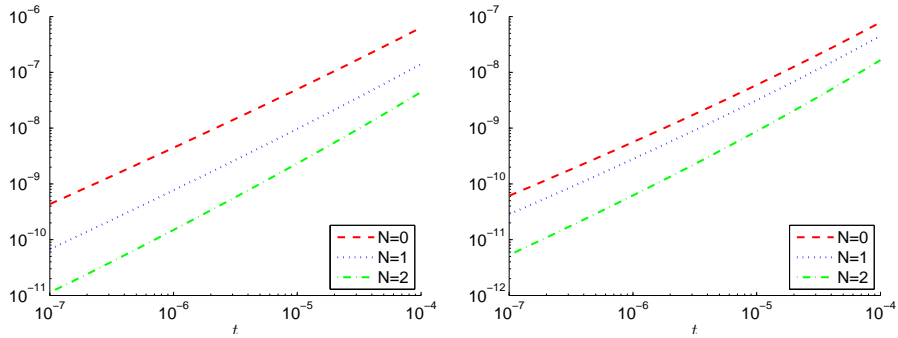

	\centering
	\begin{tabular}{cc}
		\includegraphics[width=0.45\textwidth]{Fig_VF_Beta_1_Alpha_p08_AsyExp_Small_Differ} 
		&
		\includegraphics[width=0.45\textwidth]{Fig_VF_Beta_3_Alpha_p06_AsyExp_Small_Differ} \tabularnewline
	\end{tabular}
	\caption{Difference, for small values $t>0$, between $\mu(t,\beta,\alpha)$ and some of the truncated expansions $\mu_{N}(t,\beta,\alpha)$ for $\beta=1$ and $\alpha=0.8$ (left plot) and $\beta=3$ and $\alpha=0.6$ (right plot)} 
\label{fig:AsyExp_Small_Differ}
\end{figure}

Similar results are presented in Figure \ref{fig:AsyExp_Small2}, where the values of $\mu(t,\beta,\alpha)$, for $\beta=3$ and $\alpha=0.6$, are instead investigated; the differences between $\mu(t,\beta,\alpha)$ and $\mu_N(t,\beta,\alpha)$ are presented in the right plot of Figure \ref{fig:AsyExp_Small_Differ}.

\begin{figure}[ht!]
	\centering
		\includegraphics[width=0.8\textwidth]{Fig_VF_Beta_3_Alpha_p06_AsyExp_Small_Compare} 
	\caption{Comparison between $\mu(t,\beta,\alpha)$ and the first term in the asymptotic expansion (\ref{eq:AsymptExpansionSmall}) for $\beta=3$ and $\alpha=0.6$ and small $t>0$.} 
\label{fig:AsyExp_Small2}
\end{figure}

\subsection{Expansion of $\mu(z,\beta,\alpha)$ for large $z$}

The major contribution in the study of the asymptotic behaviour of $\mu(z,\beta,\alpha)$ for large $z$ has been provided in 1969 by Wyman and Wong \cite{WymanWong1969} and was obtained by suitably deforming the Bromwich path in the formula (\ref{eq:LTInversion1}) for the inversion of the Laplace transform. In particular, it has been shown that 
\begin{equation}\label{eq:AsymptExpansionLarge}
	\mu(z,\beta,\alpha) = E(z, \beta, \alpha) + H(z, \beta, \alpha)
	, \quad
	|z| \to \infty
	, \quad 
	|\arg(z)| < \pi ,
\end{equation}
where 
\begin{equation}
	E(z, \beta, \alpha) = e^z \sum_{n=0}^{\infty} \frac{E^{(\alpha,\beta)}_{n}}{\Gamma(\beta+1-n)} z^{\beta-n}
\end{equation}
and
\begin{equation}
	H(z, \beta, \alpha) = z^{\alpha} \sum_{n=0}^{\infty} D^{(\alpha,\beta)}_n \left( \log \frac{1}{z} \right)^{-\beta-1-n} .
\end{equation}
The coefficients $E^{(\alpha,\beta)}_{n}$ in $E(z, \beta, \alpha)$ are obtained from the expansion of the generating function 
${(1-x)^{-\alpha-1} (-x)^{\beta+1}/ \bigl(\log (1-x)\bigr)^{\beta+1}}$ and can be represented as 
\begin{equation}
	E^{(\alpha,\beta)}_{n} = \frac{(-1)^n}{n!} \frac{d^{n}}{dx^{n}} \left. \frac{(1-x)^{-\alpha-1} (-x)^{\beta+1}}{\bigl( \log(1-x)\bigr)^{\beta+1}} \right|_{x=0} ,
\end{equation}
while $H(z, \beta, \alpha)$ is the same expansion introduced in (\ref{eq:AsymptExpansionSmall}) for small arguments.

Whenever $|\arg(z)|<\frac{\pi}{2}$ the dominant behaviour of $\mu(z,\beta,\alpha)$ is controlled by the exponentially large term in $E(z, \beta, \alpha)$ and the contribution of $H(z, \beta, \alpha)$ is negligible; on the contrary, when $|\arg(z)|>\frac{\pi}{2}$, the term $H(z, \beta, \alpha)$ dominates the exponentially small term $E(z, \beta, \alpha)$. In the neighborhood of $\arg(z) = \pm \frac{\pi}{2}$ a mixed type expansion is involved and the contribution of both terms $E(z, \beta, \alpha)$ and $H(z, \beta, \alpha)$ must be combined.

For $\beta=-n -1$, $n\in\Nset$, we note that $E(z, \beta, \alpha)=0$ and only a finite number of terms in $H(z, \beta, \alpha)$, namely the first $n+2$, differs from $0$, thus providing a simplified expansion for $\mu(z,-n-1,\alpha)$ according to
\begin{equation}
	\mu(z,-n-1,\alpha) 
	=  z^{\alpha} \sum_{k=0}^{n+1} (-n)_k D_{k}^{(\alpha)} 
	\left( \log \frac{1}{z} \right)^{n-k},
	\end{equation} 
for $ |z| \to \infty, \; |\arg(z)| < \pi$..

The expansion (\ref{eq:AsymptExpansionLarge}) obviously applies also to the special case $\beta=0$, for which  $\mu(z,\beta,\alpha)=\nu(z,\alpha)$, and allows to correct an erroneous expansion presented in \cite{ErdelyiMagnusOberhettingerTricomi3}. The same error was noted, in the same year 1969, in the two distinct papers \cite{DorningNicolaenkoThurber1969} and \cite{WymanWong1969}.

The comparison between $\mu(t,\beta,\alpha)$ and the first term in the asymptotic expansion (\ref{eq:AsymptExpansionLarge}) for $\beta=1$, $\alpha=0.8$ is presented in Figure \ref{fig:AsyExp_Large1}.

\begin{figure}[ht!]
	\centering
		\includegraphics[width=0.8\textwidth]{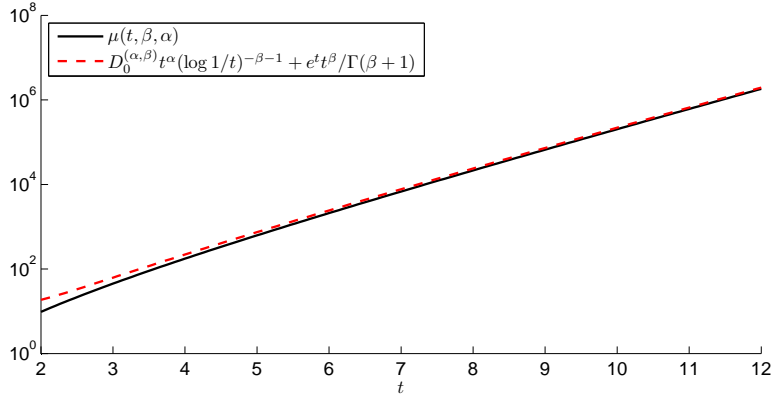} 
	\caption{Comparison between $\mu(t,\beta,\alpha)$ and the first term in the asymptotic expansion (\ref{eq:AsymptExpansionLarge}) for $\beta=1$, $\alpha=0.8$ and large $t>0$.} 
\label{fig:AsyExp_Large1}
\end{figure}

Also for large arguments it seems quite clear that as $t\to\infty$ the asymptotic expansion (\ref{eq:AsymptExpansionLarge}) provides an accurate approximation of the original function. Similar results are obtained for $\beta=3$ and $\alpha=0.6$, whose corresponding plot is presented in Figure \ref{fig:AsyExp_Large2}.

\begin{figure}[ht!]
	\centering
		\includegraphics[width=0.8\textwidth]{Fig_VF_Beta_3_Alpha_p06_AsyExp_Large_Compare} 
	\caption{Comparison between $\mu(t,\beta,\alpha)$ and the first term in the asymptotic expansion (\ref{eq:AsymptExpansionLarge}) for $\beta=3$ and $\alpha=0.6$ and large $t>0$.} 
\label{fig:AsyExp_Large2}
\end{figure}

We conclude by presenting, in Figure \ref{fig:AsyExp_Large_Differ}, the differences between $\mu(t,\beta,\alpha)$ and the approximations
\begin{equation}
\begin{array}{ll}
	\mu_{N}(t,\beta,\alpha)  
	=  & \! {\ds t^{\alpha} \sum_{n=0}^{N} D^{(\alpha,\beta)}_n \left( \log \frac{1}{t} \right)^{-\beta-1-n}}   \\
&{\ds 	+\,  e^t \sum_{n=0}^{N} \frac{E^{(\alpha,\beta)}_{n}}{\Gamma(\beta+1-n)} t^{\beta-n}},
\end{array}
\end{equation}
obtained also in this case by truncating the series (\ref{eq:AsymptExpansionLarge}) after $N$ terms. Since the large values assumed by $\mu(t,\beta,\alpha)$, in these comparisons a relative difference has been plotted. Also for large arguments, from the graphical results we can infer a fast convergence of the asymptotic representations toward the original function.

\begin{figure}[ht!]
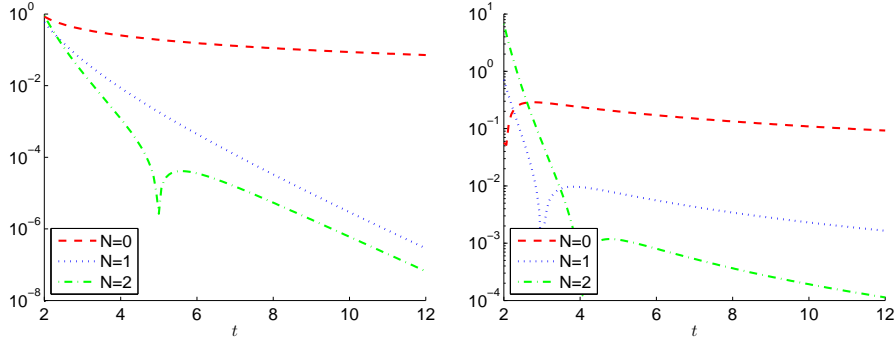

	\centering
	\begin{tabular}{cc}
		\includegraphics[width=0.45\textwidth]{Fig_VF_Beta_1_Alpha_p08_AsyExp_Large_Differ} 
		&
		\includegraphics[width=0.45\textwidth]{Fig_VF_Beta_3_Alpha_p06_AsyExp_Large_Differ} \tabularnewline
	\end{tabular}
	\caption{Relative difference, for large values $t>0$, between $\mu(t,\beta,\alpha)$ and some of the truncated expansions $\mu_{N}(t,\beta,\alpha)$ for $\beta=1$ and $\alpha=0.8$ (left plot) and $\beta=3$ and $\alpha=0.6$ (right plot)} 
\label{fig:AsyExp_Large_Differ}
\end{figure}

\subsection{Asymptotic of the Ramanujan's function}

The asymptotic behaviour of the Ramanujan's function $N(t)$ has been studied in \cite{Bouwkamp1971,DorningNicolaenkoThurber1969,LlewllynSmith2000}; this analysis is motivated not only by some important applications of this function (the work in \cite{DorningNicolaenkoThurber1969}, for instance, was stimulated by a problem in neutron transport theory) but also by the fact that, since the Ramanujan identity (\ref{eq:RamanIdentity}), the analysis of $N(t)$ allows to provide information on the special case $\nu(t)$ of the Volterra function.

For real arguments $t>0$, in 1969 Dorning, Nicolaenko and Thurber \cite{DorningNicolaenkoThurber1969} proved that 
\begin{equation}\label{eq:RamajExpansion}
	N(t) = \frac{1}{\log t} \sum_{n=0}^{\infty} \frac{D_{n}}{(\log t)^{n},}
	,\;
	t\to\infty ,
\end{equation}	
where the coefficients $D_{n}$ are obtained from the series expansion of $1/\Gamma(1-x)$
\begin{equation}
	\frac{1}{\Gamma(1-x)} = \sum_{n=0}^{\infty} D_{n}\frac{x^{n}}{n!}
\end{equation}
and hence
\begin{equation}
	D_{n} = \left.(-1)^n \frac{d^n}{dx^n} \frac{1}{\Gamma(x)}\right|_{x=1} .
\end{equation}

The expansion (\ref{eq:RamajExpansion}) was successively confirmed in 1971 by Bouwkamp \cite{Bouwkamp1971} who worked on a generalization of the function $N(t)$ which, however, does not seem of interest in this context. 

In the more recent year 2000, Llewllyn-Smith \cite{LlewllynSmith2000} have proved that (\ref{eq:RamajExpansion}) still holds for large complex arguments with $|\arg(z)|\le \frac{\pi}{2}$ (for which the integral defining $N(z)$ converges) and that, in the same right-half of the complex plane, it is also possible the alternative representation 
\begin{equation}
	N(z) = \frac{1}{\log |z|} \sum_{n=0}^{\infty} \frac{a_{n}(\theta)}{(\log |z|)^{n}},
	\;
	|z| \to \infty,
	\;
	\arg(z) \in \left[-\frac{\pi}{2},\frac{\pi}{2} \right],
\end{equation}
where $\theta = \arg(z)$ the coefficients $a_{n}(\theta)$ are now obtained from the series expansion of the modified generating function 
\begin{equation}
	\frac{e^{-i\theta x}}{\Gamma(1-x)} = \sum_{n=0}^{\infty} a_{n}(\theta)\frac{x^{n}}{n!} .
\end{equation}

From (\ref{eq:Dalpha}) it is immediate to verify that $D_n$ are the special case, for $\alpha=0$, of the coefficients $D^{(\alpha)}_n$ introduced in the Subsection \ref{SS:mu_asy_small} (i.e., $D_n=D^{(0)}_n$) and used for defining the function $H(z,\beta,\alpha)$ in the expansion (\ref{eq:AsymptExpansionLarge}). Since it is readily verified that $E(z, 0, 0)=e^{z}$,  the expansion (\ref{eq:RamajExpansion}) turns out to be a special case of (\ref{eq:AsymptExpansionSmall}). 

More generally, we observe that for $\beta = k \in \Nset$ only the first $k$ terms of the function $E$ in (\ref{eq:AsymptExpansionLarge}) differ from $0$. A numerical computation (performed as described in the following Subsection \ref{SS:EvaluationCoeffAsymExp}) has allowed us to verify that $E(t,k,\alpha)=e^tP_k(t)/k!$. As a consequence, thanks to (\ref{eq:AsymptExpansionLarge}) we are able to provide an asymptotic expansion of the generalized Ramanujan functions according to
\begin{equation}\label{eq:RamanGenerExpansion}
	N_k(t,\alpha) = t^{\alpha} \sum_{n=0}^{\infty} D^{(\alpha,k)}_n \left( \log \frac{1}{t} \right)^{-\beta-1-n} 
	, \quad t \to \infty .
\end{equation} 

We have not proved the above expansion in a formal way but we have just formulated a conjecture on the bases of the experimentally tested equivalence $E(t,k,\alpha)=e^tP_k(t)/k!$, for $k\in \Nset$. Nevertheless, since our experiments have been conducted, with positive outcomes, for a very large number of parameters $\alpha$ and $k$ and, moreover, for the limit case $\alpha=0$ the expansion is confirmed by the Ramanujan identity (\ref{eq:RamanIdentity}) together with the expansion (\ref{eq:RamajExpansion}), we think that (\ref{eq:RamanGenerExpansion}) is a reasonable conjecture whose formal proof remains however an open problem.

\subsection{Evaluation of coefficients $D^{(\alpha)}_n$ and $E^{(\alpha,\beta)}_n$}\label{SS:EvaluationCoeffAsymExp}

The evaluation of the first coefficient in both sequences $\bigl\{D^{(\alpha)}_n\bigr\}_{n\in\Nset}$ and $\bigl\{E^{(\alpha,\beta)}_n\bigr\}_{n\in\Nset}$ is immediate and indeed it is $D^{(\alpha)}_0 = 1/\Gamma(\alpha+1)$ and $E^{(\alpha,\beta)}_0=1$. For the subsequent coefficients it is necessary to evaluate, at $x=0$, the successive derivatives of the corresponding generating functions. For $\bigl\{D^{(\alpha)}_n\bigr\}_{n\in\Nset}$ computing the derivatives of the reciprocal of the Gamma function is a non simple task involving other special functions, in particular the polygamma function. Moreover, also the evaluation of the successive derivatives of the generating function of $E^{(\alpha,\beta)}_n$ can be a long and tedious task. In many circumstances it is advisable to proceed by means of a numerical approach.

The problem of computing the successive derivatives of a function $F(x)$ can be effectively solved by combining the Cauchy integral representation together with numerical quadrature. Indeed
\begin{equation}
	F_n \equiv \left. \frac{d^n}{dx^n} F(x) \right|_{x=0} = \frac{n!}{2\pi i} \int_{{\mathcal C}} \xi^{-n-1} F(\xi) d \xi ,
\end{equation}
where ${\mathcal C}$ is any simple closed contour enclosing the origin $x=0$ and lying in a region of the complex plane in which $F$ is analytic (e.g., see \cite{AblowitzFokas2003}). By assuming, for simplicity, that ${\mathcal C}$ is the circle with radius $\rho>0$ centered at the origin, the change of variable $x=\rho e^{i \theta}$ allows us to write
\begin{equation}
	F_n = \frac{n!}{2\pi \rho^n} \int_{0}^{2\pi} e^{-i \theta n} F\bigl(\rho e^{i \theta}\bigr) d \theta .
\end{equation}

Given on $[0,2\pi]$ a grid of $K$ equispaced nodes $\theta_{k} = 2 \pi k/ K$, $k=0,\dots,K-1$, the above integral can be approximated by means of a simple quadrature formula such as, for instance, the trapezoidal rule which provides the approximations
\begin{equation}
	\tilde{F}_n = \frac{n!}{K \rho^n} \sum_{k=0}^{K-1} e^{-i \theta_k n} F\bigl(\rho e^{i \theta_k}\bigr) 
\end{equation}
and efficient methods, based on the FFT algorithm, can be used to perform this computation in a very fast way. To approximate the integral in an accurate way, a quite large number of nodes can be necessary (we refer to \cite{TrefethenWeideman2014} for an insightful error analysis of the trapezoidal rule and for references to related works); we observe, however, that the same values of $F$ can be reused to approximate the whole series of coefficients $F_n$ for any $n=0,\dots,N$, thus reducing substantially the computational cost.

The radius $\rho$ of the circle on which performing the integration must be selected in order to guarantee the analyticity of $F(x)$, assure fast convergence (with the aim of keeping computational cost at minimum) and control round--off errors. An in-depth discussion on how to select the radius $\rho$ with the aim of reducing round-off errors can be found in \cite{Bornemann2011}; usually a small radius $\rho$ involves fast convergence but higher round-off errors; thus it is necessary to balance convergence and errors. The generating function ${1}/{\Gamma(\alpha + x +1)}$ of $D^{(\alpha)}_n$ is analytic on the whole complex plane while for the generating function of $E^{(\alpha,\beta)}_n$ it is necessary to impose $\rho < 1$. 

Experimentally, a radius $\rho\approx1/2$ has turned out to be satisfactory enough in both cases. Moreover, as the order $n$ of the derivative increases, it is necessary to increase also $\rho$. For this reason it is advisable to select the radius $\rho$ and the number of nodes $K$ on the basis of the higher order of the derivative which one is interested to compute.

\section{Concluding remarks}\label{S:ConcludingRemarks}

In this paper we have presented a general review of the Volterra functions. In particular, after providing an historical overview, our investigation has focused with the Laplace transform; in particular we have provided an explicit representation of the residue of the Laplace transform, thus allowing a larger degree of freedom in the choice of the contour for the numerical evaluation of the Volterra function by inversion of its Laplace transform.

We have also examined the relationship with integrals of Ramanujan type, which present important and interesting applications in physics, and provided a generalization of these integrals to any integer value of the parameter $\beta$, a result originally presented in \cite{Apelblat2008} only for few instances of $\beta$.

Finally, our attention has been devoted to present, under a quite general framework, the analysis on the asymptotic behaviour for small and large arguments: both topics are of interest for the practical computation of these special functions; moreover, we have illustrated the main results on the asymptotic behaviour by means of some plots obtained on the basis of the numerical inversion of the Laplace transform.

As an instructive example, in the Appendix we have shown the way in which the original problem formulated by the Italian mathematician Vito Volterra can be solved by means of the Volterra functions. 

We think that the possible applications of Volterra functions, in particular for describing and studying non local operators with slow varying kernels of logarithmic type is a not completely explored field.

The aim of this paper is to provide to the academic community a comprehensive knowledge  and a list of references for possible applications of Volterra functions.

\section*{Appendix}
In this Appendix we  use the technique of Laplace transform to solve the Volterra equation of the first kind 
$$ 	\int_0^t u(\tau) \, \log (t-\tau)\, d\tau = f(t)\,,
	\leqno(A.1)$$
	where $f(t)$  is an assigned, sufficiently well-behaved  function with $f(0)=0$.
	It is an instructive exercise in order to derive in the simplest way the solution obtained originally by Volterra \cite{Volterra1916} which has lead 
	to the  function $\nu(t)$ 	using  a cumbersome notation.
	
We adopt the sign $\div$ to denote the juxtaposition of the function $f(t)$
with its Laplace transform according to 
$$f(t) \,\div\, \widetilde f(s):=  {\mathcal{L}}\, \left[ f(t);s \right]  := \int_0^\infty \!\!
   e^{-st}\, f(t)\, dt\,.$$
To solve Eq. (A.1) we need to recall the following Laplace transform pairs
$$ \log(t) \, \div\,  -\frac{\log \, s + \gamma}{s}\,, \leqno(A.2)$$
where $\gamma  = 0.57721...$ is the Euler-Mascheroni constant, and 
$$\nu(t)  
:= \int_0^\infty \frac{t^u}{\Gamma(u+1)}\,du
\, \div \, \frac{1}{ s \, \log \,s}\,. \leqno(A.3)$$
As a consequence of the convolution the solution of Eq (A.1) in the Laplace domain reads
 $$\widetilde u(s) =  -\frac{s \widetilde f(s)}{\log s +\gamma} \,.\leqno(A.4)$$ 
Now $s\widetilde f(s)$
can be interpreted as the Laplace transform of the first derivative of $f(t)$ whereas
  $$\log s +\gamma = \log \left(s \, e^\gamma\right )
  \div e^{-\gamma} \, \dot \nu \left(t\, e^{-\gamma}\right) \,, \leqno (A.5)$$
  being $\nu(0)=0$.
  Above we have used the scaling property for a generic function $g(t)$ applied to 
  $\dot \nu(t)$
   $$ \frac{1}{c}  g\left (\frac{t}{c}\right) \,\div\, \widetilde g(c s)\,, \quad c>0\,.$$ 
  Then 
   $$ u(t) = - e^{-\gamma} \, \int_0^t \!\!
  \dot f (t-\tau) \, \dot \nu\left(\tau e^{-\gamma}\right)\, d\tau\,, \leqno(A.6)   
  $$
  in agreement with that anticipated in the Introduction.
  
  We can recast the   Volterra function $\nu(t)$  if we require $f(t)$ be twice differentiable  with $\dot f(0) = f(0) =0$.
  In fact, rewriting    Eq. (A.4) as
  $$\widetilde u(s) =  -\frac{s^2 \,\widetilde f(s)}{s \,(\log s +\gamma)} 
   = -\frac{s^2 \,\widetilde f(s)}{s\,\log \left( s \,e^\gamma \right)}
  \,,\leqno(A.7)$$
  we get 
  $$ u(t) = - e^{-\gamma} \, \int_0^t \!\!
  \stackrel{\cdot \cdot} {f} (t-\tau) \,  \nu\left(\tau e^{-\gamma}\right)\, d\tau\,.
  \leqno(A.8)   
  $$
  
  In the particular case $f(t)= t$, we obtain from Eq. (A.6) and from the definition of
   $\nu(t)$ in Eq. (A.3)
   $$ u(t) = -\int_0^\infty \frac{t^u\, e^{-\gamma u}}{\Gamma(u+1)}\,du\,.
   \leqno(A.9)$$

\section*{Acknowledgements}
The authors appreciate constructive remarks and suggestions of the referees  that helped to improve the manuscript.

Furthermore, F. Mainardi  likes to thank Prof. Alexander Apelblat for providing him  with the copies of his books and for keeping a correspondence via e-mail on the matters related to Volterra functions. Without this correspondence, this survey paper would not have conceived. Let us note that Apelblat devoted his attention to Volterra functions since several years as a Chemist Engineer, not as a Mathematician. His current position is Emeritus Professor at the  Chemical Engineering Department, Ben-Gurion University of the Negev, Beer Sheva, Israel.

The work of R. Garrappa is supported by the INdAM-GNCS under the project 2015 ``Metodi numerici per problemi di diffusione anomala''. The work of F. Mainardi has been carried out in the framework of the activities 
of the National Group of Mathematical Physics (INdAM-GNFM) and of the Interdepartmental Center ``L. Galvani'' for integrated studies of Bioinformatics, Biophysics and Biocomplexity of the University of Bologna.


\bibliographystyle{plain}

\end{document}